\newtheorem{theorem}{Theorem}
\newtheorem{lemma}[theorem]{Lemma}
\newtheorem{proposition}[theorem]{Proposition}
\newtheorem{corollary}[theorem]{Corollary}
\theoremstyle{definition}
\newtheorem{example}[theorem]{Example}
\definecolor{webgreen}{rgb}{0,.5,0}
\definecolor{webbrown}{rgb}{.6,0,0}
\DeclareMathOperator{\Rev}{Rev}
\newcommand{\seqnum}[1]{\href{http://www.research.att.com/cgi-bin/access.cgi/as/~njas/sequences/eisA.cgi?Anum=#1}{\underline{#1}}}
\begin{document}

\begin{center}
\vskip 1cm{\LARGE\bf On the duals of the Fibonacci and Catalan-Fibonacci polynomials and Motzkin paths} \vskip 1cm \large
Paul Barry\\
School of Science\\
Waterford Institute of Technology\\
Ireland\\
\href{mailto:pbarry@wit.ie}{\tt pbarry@wit.ie}
\end{center}
\vskip .2 in

\begin{abstract} We use the inversion of coefficient arrays to define dual polynomials to the Fibonacci and Catalan-Fibonacci polynomials, and we explore the properties of these new polynomials sequences. Many of the arrays involved are Riordan arrays. Direct links to the counting of Motzkin paths by different statistics emerge. \end{abstract}

\section{Preliminaries}
The Fibonacci polynomials are the family of polynomials $F_n(y)$ with generating function $F(x,y)=\frac{x}{1-yx-x^2}$ \cite{Hoggatt1, Hoggatt2, Ricci, Yuan}. We immediately have that $F_n(1)=F_n$, the Fibonacci numbers \seqnum{A000045}, which explains the name of this family. We have
\begin{align*} F_0(y)&=0\\
F_1(y)&=1\\
F_2(y)&=y\\
F_3(y)&=y^2+1\\
F_4(y)&=y^3+2y\\
\ldots \end{align*}
By the \emph{dual Fibonacci polynomials} $\hat{F}_n(y)$ we shall mean the polynomials whose generating function is given by the series reversion of $F(x,y)$, where the reversion is taken with respect to $x$. To find this generating function, we solve the equation
$$\frac{u}{1-yu-u^2}=x$$ to get the solution
$$u(x)=\frac{1-yx-\sqrt{1+2yx+(y^2+4)x^2}}{2x}.$$ We find that
\begin{align*}
\hat{F}_0(y)&=0\\
\hat{F}_1(y)&=1\\
\hat{F}_2(y)&=-y\\
\hat{F}_3(y)&=y^2-1\\
\hat{F}_4(y)&=-y^3+3y\\
\ldots \end{align*}
More insight is gained by characterizing the coefficient arrays of these polynomials. It will be seen that many of the coefficient arrays we meet in this note are Riordan arrays \cite{book, SGWW} or are closely related to them. Many examples of Riordan arrays are documented in the On-Line Encyclopedia of Integer Sequences (OEIS) \cite{SL1, SL2}. Sequences in this database are referenced by their $Axxxxxx$ numbers.
\begin{lemma} The coefficient array of the Fibonacci polynomial sequence $F_1(y), F_2(y), F_3(y),\ldots$ is the Riordan array $\left(\frac{1}{1-x^2}, \frac{x}{1-x^2}\right)$.
\end{lemma}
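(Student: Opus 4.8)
The plan is to pass from coefficient arrays to bivariate generating functions, where Riordan arrays take a particularly simple form. Recall that if $\mathcal{R}=(g(x),f(x))$ is a Riordan array with $(n,k)$ entry $T_{n,k}=[x^n]\,g(x)f(x)^k$ and row polynomials $R_n(y)=\sum_{k\ge 0}T_{n,k}y^k$, then summing over $k$ yields the formal identity
$$\sum_{n\ge 0}R_n(y)\,x^n=\sum_{n,k}T_{n,k}\,y^k x^n=g(x)\sum_{k\ge 0}\bigl(y f(x)\bigr)^k=\frac{g(x)}{1-y f(x)}.$$
Hence, to identify the coefficient array of $F_1(y),F_2(y),F_3(y),\dots$ with $\left(\tfrac{1}{1-x^2},\tfrac{x}{1-x^2}\right)$, it suffices to verify that $\sum_{n\ge 0}F_{n+1}(y)\,x^n$ coincides with $\dfrac{g(x)}{1-y f(x)}$ for $g(x)=\tfrac{1}{1-x^2}$ and $f(x)=\tfrac{x}{1-x^2}$.

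First I would record the index shift. Since $F(x,y)=\sum_{n\ge 1}F_n(y)x^n=\frac{x}{1-yx-x^2}$ and $F_0(y)=0$, dividing by $x$ gives $\sum_{n\ge 0}F_{n+1}(y)\,x^n=\frac{1}{1-yx-x^2}$, so this is the bivariate generating function of the sequence in question (with row $n$ being $F_{n+1}(y)$). Next comes a one-line computation with the candidate Riordan array:
$$\frac{g(x)}{1-y f(x)}=\frac{\tfrac{1}{1-x^2}}{\,1-\tfrac{yx}{1-x^2}\,}=\frac{1}{(1-x^2)-yx}=\frac{1}{1-yx-x^2},$$
which is exactly the generating function of the previous step. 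Comparing coefficients of $x^n y^k$ then shows that the coefficient of $y^k$ in $F_{n+1}(y)$ equals $[x^n]\,\tfrac{1}{1-x^2}\bigl(\tfrac{x}{1-x^2}\bigr)^k$, i.e. the $(n,k)$ entry of $\left(\tfrac{1}{1-x^2},\tfrac{x}{1-x^2}\right)$, which is the assertion. As a sanity check one can expand the $k$-th column $\tfrac{x^k}{(1-x^2)^{k+1}}$ to obtain the closed form $\binom{(n+k)/2}{(n-k)/2}$ when $n\equiv k\pmod 2$ and $0$ otherwise, and confirm that rows $0$ through $3$ reproduce $1,\ y,\ y^2+1,\ y^3+2y$.

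There is essentially no serious obstacle here: the content reduces to the observation that the row-polynomial generating function of a Riordan array is $g/(1-yf)$, combined with the elementary simplification above. The only point requiring a little care is the bookkeeping between ``$F_n(y)$ for $n\ge 1$'' and ``row $n$ for $n\ge 0$'', together with the remark that $F_0(y)=0$ is exactly what makes the shifted series $\frac{1}{1-yx-x^2}$ — rather than a series with a spurious extra term — the correct object to match against a Riordan array, whose $(0,0)$ entry is $g(0)=1=F_1(y)$.
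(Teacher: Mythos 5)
Your proposal is correct and follows exactly the paper's argument: identify the bivariate generating function of the Riordan array as $g(x)/(1-yf(x))$ and check that it simplifies to $\frac{1}{1-yx-x^2}$, which (after dividing $F(x,y)$ by $x$) is the generating function of the shifted sequence $F_{n+1}(y)$. You merely spell out the index-shift bookkeeping and the geometric-series derivation of $g/(1-yf)$ that the paper leaves to ``the theory of Riordan arrays.''
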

\begin{proof} By the theory of Riordan arrays, the bivariate generating function of the Riordan array $\left(\frac{1}{1-x^2}, \frac{x}{1-x^2}\right)$ is given by
$$\frac{\frac{1}{1-x^2}}{1-y\frac{x}{1-x^2}}=\frac{1}{1-yx-x^2}.$$
\end{proof}
\begin{corollary} We have
$$F_{n+1}(y)=\sum_{k=0}^n \binom{\frac{n+k}{2}}{k}\frac{1+(-1)^n}{2} y^k.$$
\end{corollary}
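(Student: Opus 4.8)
The plan is to read the Corollary straight off Lemma~1 by computing the general $(n,k)$-entry of the Riordan array $\left(\frac{1}{1-x^2},\frac{x}{1-x^2}\right)$ and then identifying row $n$ with the coefficient vector of $F_{n+1}(y)$. So the whole argument is a single coefficient extraction dressed up as a closed form.

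First I would invoke the fundamental expansion rule for a Riordan array $(g(x),h(x))$: its $(n,k)$-entry is $[x^n]\,g(x)h(x)^k$. Here this gives
$$T(n,k)=[x^n]\,\frac{1}{1-x^2}\left(\frac{x}{1-x^2}\right)^{k}=[x^n]\,\frac{x^{k}}{(1-x^2)^{k+1}}=[x^{n-k}]\,\frac{1}{(1-x^2)^{k+1}}.$$
Next I would expand $(1-x^2)^{-(k+1)}=\sum_{j\ge 0}\binom{j+k}{k}x^{2j}$ by the generalized binomial theorem, so that extracting the coefficient of $x^{n-k}$ forces $n-k=2j$. Hence $T(n,k)=0$ unless $n\equiv k\pmod 2$, while for $n\equiv k\pmod 2$ we get $j=\tfrac{n-k}{2}$ and
$$T(n,k)=\binom{\tfrac{n-k}{2}+k}{k}=\binom{\tfrac{n+k}{2}}{k}.$$
The parity restriction $n\equiv k\pmod 2$ is exactly the indicator $\tfrac{1+(-1)^{n-k}}{2}$ (equivalently $\tfrac{1+(-1)^{n+k}}{2}$), and under the usual convention that $\binom{m}{k}$ vanishes when $m\notin\mathbb{Z}_{\ge 0}$ this factor is in fact already built into the binomial coefficient itself.

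Finally, Lemma~1 identifies this array as the coefficient array of $F_1(y),F_2(y),F_3(y),\ldots$, so (indexing rows from $0$) row $n$ lists the coefficients of $F_{n+1}(y)$; since $T(n,k)=0$ for $k>n$, assembling $F_{n+1}(y)=\sum_{k=0}^{n}T(n,k)y^{k}$ yields the stated identity. I do not anticipate a genuine obstacle — it is a direct extraction — and the only point needing a little care is the bookkeeping of the parity factor: one must make sure the reading of $\binom{\frac{n+k}{2}}{k}$, with the convention that it vanishes unless $\tfrac{n+k}{2}\in\mathbb{Z}_{\ge 0}$, is consistent with the parity factor displayed in the statement, and it is worth a quick check against $F_2(y)=y$, $F_3(y)=y^2+1$, $F_4(y)=y^3+2y$, i.e.\ the cases $n=1,2,3$.
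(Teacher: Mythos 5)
Your proof is correct and is essentially the paper's own argument: the paper likewise just extracts $[x^n]\frac{1}{1-x^2}\bigl(\frac{x}{1-x^2}\bigr)^k$ to read off the general entry, though it omits the expansion details you supply. You are also right to flag the parity factor: the indicator should be $\frac{1+(-1)^{n-k}}{2}$ (as the displayed array confirms, e.g.\ the nonzero entry at $n=k=1$), so the $\frac{1+(-1)^{n}}{2}$ in the statement is a typo that your derivation correctly repairs.
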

\begin{proof} The $(n,k)$-th element of the Riordan array $\left(\frac{1}{1-x^2}, \frac{x}{1-x^2}\right)$ is given by
$$t_{n,k}=[x^n] \frac{1}{1-x^2}\left(\frac{x}{1-x^2}\right)^k=\binom{\frac{n+k}{2}}{k}\frac{1+(-1)^n}{2}.$$
\end{proof}
This coefficient array begins
$$\left(
\begin{array}{cccccc}
 1 & 0 & 0 & 0 & 0 & 0 \\
 0 & 1 & 0 & 0 & 0 & 0 \\
 1 & 0 & 1 & 0 & 0 & 0 \\
 0 & 2 & 0 & 1 & 0 & 0 \\
 1 & 0 & 3 & 0 & 1 & 0 \\
 0 & 3 & 0 & 4 & 0 & 1 \\
\end{array}
\right).$$
The combinatorial meaning of the $(n,k)$-th element of this array is that it counts the number of ways an $n \times 1$ board can be tiled with $2 \times 1$ dominoes and exactly $k$ $1 \times 1$ squares.
The inversion of this array, denoted by $\left(\frac{1}{1-x^2}, \frac{x}{1-x^2}\right)^!$, begins
$$\left(
\begin{array}{cccccc}
 1 & 0 & 0 & 0 & 0 & 0 \\
 0 & -1 & 0 & 0 & 0 & 0 \\
 -1 & 0 & 1 & 0 & 0 & 0 \\
 0 & 3 & 0 & -1 & 0 & 0 \\
 2 & 0 & -6 & 0 & 1 & 0 \\
 0 & -10 & 0 & 10 & 0 & -1 \\
\end{array}
\right).$$
The array $\left(\frac{1}{1-x^2}, \frac{x}{1-x^2}\right)$ is an element of the Bell subgroup of the group of Riordan arrays. We therefore have the following \cite{Rinversion}.
\begin{corollary} The coefficient array of the dual Fibonacci polynomials $\hat{F}_1(y), \hat{F}_2(y), \hat{F}_3(y),\ldots$ is given by the exponential Riordan array
$$\left[\frac{I_1(2ix)}{ix}, -x\right].$$
\end{corollary}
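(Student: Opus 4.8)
The plan is to pass to generating functions, invoke the result of \cite{Rinversion} that the coefficient-array inverse of a Bell-subgroup \emph{ordinary} Riordan array is an \emph{exponential} Riordan array with second component $-x$, and then pin down the first component and recognise it as $I_1(2ix)/(ix)$. Write $g(x)=1/(1-x^2)$, so that the coefficient array of the Fibonacci polynomials, shown above to equal $\left(\frac{1}{1-x^2},\frac{x}{1-x^2}\right)$, is the Bell-type array $(g(x),xg(x))$, and $F(x,y)=xg(x)/(1-xyg(x))$. Let $v=v(x,y)=\sum_{n\ge 0}\hat F_n(y)x^n$ be the reversion of $F$ in $x$, characterised by $v/(1-yv-v^2)=x$. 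The first thing I would record is that this equation rearranges to $v(1+xy)=x(1-v^2)$, i.e.
$$\frac{v}{1-v^2}=\frac{x}{1+xy},$$
so that $v(x,y)=h\!\left(\dfrac{x}{1+xy}\right)$, where $h$ is the reversion in $z$ of $zg(z)=z/(1-z^2)$.

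Next I would identify $h$ in closed form. Solving $h/(1-h^2)=z$ gives $zh^2+h-z=0$, hence $h(z)=\bigl(-1+\sqrt{1+4z^2}\bigr)/(2z)$; writing $h(z)=zB(z^2)$ shows that $B$ satisfies $B=1-z^2B^2$, which is the Catalan functional equation with $z^2$ replaced by $-z^2$, so $h(z)=\sum_{k\ge 0}(-1)^kC_k\,z^{2k+1}$, the signed aerated generating function of the Catalan numbers $C_k$. Substituting the argument $x/(1+xy)$,
$$\sum_{n\ge 0}\hat F_{n+1}(y)\,x^n=\frac{v(x,y)}{x}=\sum_{k\ge 0}(-1)^kC_k\,\frac{x^{2k}}{(1+xy)^{2k+1}}.$$

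Then I would extract coefficients: expanding $(1+xy)^{-(2k+1)}$ by the negative binomial theorem gives the closed form $\hat F_{n+1}(y)=\sum_{k\ge 0}(-1)^kC_k\binom{n}{2k}(-y)^{\,n-2k}$, so the $(n,j)$-entry of the coefficient array of $\hat F_1(y),\hat F_2(y),\ldots$ is $(-1)^j(-1)^kC_k\binom{n}{2k}$ with $k=(n-j)/2$, and vanishes unless $n-j$ is even. On the other side, the $(n,j)$-entry of an exponential Riordan array $[G(x),-x]$ with $G(x)=\sum_m\gamma_mx^m$ is $\frac{n!}{j!}(-1)^j\gamma_{n-j}$. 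Comparing the two, and using $\binom{n}{2k}=\frac{n!}{(2k)!\,j!}$ since $n-j=2k$, I would conclude $\gamma_{2k}=(-1)^kC_k/(2k)!=(-1)^k/\!\bigl(k!(k+1)!\bigr)$ and $\gamma_{\mathrm{odd}}=0$, that is $G(x)=\sum_{k\ge 0}\frac{(-1)^k}{k!(k+1)!}x^{2k}$. Finally, from the series $I_1(z)=\sum_{m\ge 0}\frac{1}{m!(m+1)!}(z/2)^{2m+1}$ one gets $I_1(2ix)/(ix)=\sum_{m\ge 0}\frac{(-1)^m}{m!(m+1)!}x^{2m}$, which is exactly $G(x)$; this completes the identification.

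The conceptual content is the passage from the Bell-subgroup structure to an array of the form $[G(x),-x]$, which I would borrow from \cite{Rinversion} (it also explains \emph{a priori} why the second component is simply $-x$); everything after that is bookkeeping. The step most prone to error — and the one I would be most careful about — is exactly this bookkeeping: keeping the branch of $\sqrt{1+4z^2}$, the factor $(-y)^{\,n-2k}$ versus $(-1)^j$, and the power $i^{2m}=(-1)^m$ mutually consistent, and matching the factorial weights $n!/j!$ of the exponential-Riordan convention against the binomial coefficients thrown up by the reversion.
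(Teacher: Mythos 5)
Your proposal is correct, and it is substantially more self-contained than the paper's argument, which consists of a single sentence: the coefficient array $\left(\frac{1}{1-x^2},\frac{x}{1-x^2}\right)$ lies in the Bell subgroup, therefore (citing \cite{Rinversion}) its inversion is the stated exponential Riordan array. The paper thus outsources the entire content of the corollary to the general theorem on inversions of Bell-subgroup Riordan arrays; you instead verify the specific instance from scratch. Your key manoeuvre --- rewriting the reversion equation as $v/(1-v^2)=x/(1+xy)$, so that $v=h\bigl(x/(1+xy)\bigr)$ with $h$ the reversion of $z/(1-z^2)$, and then recognising $h$ as the signed aerated Catalan series via the functional equation $B=1-wB^2$ --- is clean and correct; I checked the subsequent bookkeeping (the negative binomial expansion giving $\hat F_{n+1}(y)=\sum_k(-1)^{n-k}C_k\binom{n}{2k}y^{n-2k}$, the match $\gamma_{2k}=(-1)^k/(k!(k+1)!)$ against the entry formula $\frac{n!}{j!}(-1)^j\gamma_{n-j}$ of $[G(x),-x]$, and the identification with $I_1(2ix)/(ix)$) and it all goes through; it also reproduces the paper's explicit formula $\hat t_{n,k}=\binom{n}{k}C_{(n-k)/2}(-1)^{(n+k)/2}\frac{1+(-1)^{n-k}}{2}$ and, as a byproduct, the later Proposition expressing $\hat F_{n+1}$ via $\binom{n}{2k}C_k(-1)^{n-k}$. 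What the paper's route buys is brevity and a structural explanation (Bell subgroup $\Rightarrow$ inversion is $[G,-x]$ for some $G$); what yours buys is independence from the external reference and an explicit derivation of $G$, at the cost of the coefficient-extraction bookkeeping you rightly flag as the error-prone part. The one cosmetic remark is that your appeal to \cite{Rinversion} for the shape $[G(x),-x]$ is actually redundant in your own argument, since your direct entry-by-entry comparison establishes that shape anyway.
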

Here, $i=\sqrt{-1}$. The general element of this array is given by
$$\hat{t}_{n,k}=\binom{n}{k}C_{\frac{n-k}{2}}(-1)^{\frac{n+k}{2}}\frac{1+(-1)^{n-k}}{2},$$ where
$C_n=\frac{1}{n+1}\binom{2n}{n}$ is the $n$-the Catalan number.
Then $$\hat{F}_{n+1}(y)= \sum_{k=0}^n \hat{t}_{n,k} y^k.$$
The corresponding matrix $\left[\frac{I_1(2x)}{x}, x\right]$ with all nonnegative elements is \seqnum{A097610} in the OEIS. This array counts the number of Motzkin paths of length $n$ having $k$ horizontal steps.
We can generalize these results by considering the generating function $\frac{1}{1-yx-zx^2}$. Expanding this along $x$ we have the following.
\begin{align*}
[x^n]\frac{1}{1-yx-zx^2}&=[x^n](1-x(y+zx))^{-1}\\
&=[x^n] \sum_{i=0}^{\infty} x^i (y+zx)^i\\
&=[x^n] \sum_{i=0}^{\infty} x^i \sum_{j=0}^i \binom{i}{j}y^j z^{i-j} x^j\\
&= \sum_{i=0}^n \binom{i}{n-i} y^{n-i} z^{2i-n}\\
&= \sum_{i=0}^n \binom{n-i}{i} y^i z^{n-2i}.\end{align*}
We then have
$$F_{n+1}(y)=\sum_{i=0}^n \binom{i}{n-i}y^{n-i} \quad\text{and}\quad F_{n+1}(y)=\sum_{i=0}^{\lfloor \frac{n}{2} \rfloor} \binom{n-i}{i}y^i.$$
Thus we have a second and a third matrix associated with the Fibonacci polynomials.

The second matrix is the lower-triangular invertible triangle $\left(\binom{k}{n-k}\right)_{0 \le n,k \le \infty}$, which corresponds to the Riordan array $(1, x(1+x))$. This triangle begins
$$\left(
\begin{array}{cccccc}
 1 & 0 & 0 & 0 & 0 & 0 \\
 0 & 1 & 0 & 0 & 0 & 0 \\
 0 & 1 & 1 & 0 & 0 & 0 \\
 0 & 0 & 2 & 1 & 0 & 0 \\
 0 & 0 & 1 & 3 & 1 & 0 \\
 0 & 0 & 0 & 3 & 4 & 1 \\
\end{array}
\right).$$
The generating function of this matrix is given by
$$\frac{1}{1-yx(1+x)}=\frac{1}{1-yx-yx^2}.$$
To get its inversion, we thus solve the equation
$$\frac{u}{1-yu-yu^2}=x$$ to get
$$\frac{u}{x}=\frac{\sqrt{1+2yx+y(y+4)x^2}-yx-1}{2yx^2}.$$ This expands to give the matrix $(1, x(1+x))^!$ that begins
$$\left(
\begin{array}{cccccc}
 1 & 0 & 0 & 0 & 0 & 0 \\
 0 & -1 & 0 & 0 & 0 & 0 \\
 0 & -1 & 1 & 0 & 0 & 0 \\
 0 & 0 & 3 & -1 & 0 & 0 \\
 0 & 0 & 2 & -6 & 1 & 0 \\
 0 & 0 & 0 & -10 & 10 & -1 \\
\end{array}
\right).$$
The general element of this matrix is $$\tilde{t}_{n,k}=\frac{(-1)^k}{k+1}\binom{n}{k}\binom{k+1}{n-k+1}.$$ The nonnegative matrix is \seqnum{A107131}, which counts Motzkin paths of length $n$ with $k$ up steps, or $k$ horizontal steps. We let $\tilde{F}_n(y)$ be the polynomials with $$\tilde{F}_0(y)=0, \tilde{F}_1(y)=1, \tilde{F}_2=-y, \tilde{F}_3(y)=y^2-y, \tilde{F}_4(y)=-y^3+3y^2,\ldots,$$ defined by the above matrix. We have the following result.
\begin{proposition}
$$\tilde{F}_{n+1}(y)=y^n\,_2F_1\left(\frac{1}{2}-\frac{n}{2}, -\frac{n}{2}; 2 ; -\frac{4}{y}\right).$$
\end{proposition}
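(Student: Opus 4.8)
The natural approach is to match the coefficient of each power of $y$ on the two sides. From the definition, $\tilde F_{n+1}(y)=\sum_{k=0}^{n}\tilde t_{n,k}\,y^{k}$ with $\tilde t_{n,k}=\frac{(-1)^{k}}{k+1}\binom{n}{k}\binom{k+1}{n-k+1}$, and since $\binom{k+1}{n-k+1}=0$ unless $\lceil n/2\rceil\le k\le n$, the polynomial $\tilde F_{n+1}(y)$ is supported on the monomials $y^{\lceil n/2\rceil},\dots,y^{n}$. I would therefore reindex by $m=n-k$, writing $\tilde F_{n+1}(y)=\sum_{m=0}^{\lfloor n/2\rfloor}\tilde t_{n,\,n-m}\,y^{\,n-m}$, so that the powers of $y$ line up with those coming from the hypergeometric side.

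On the right, the first observation is that the series terminates: one of the numerator parameters, namely $-n/2$ when $n$ is even and $\tfrac12-\tfrac n2$ when $n$ is odd, is a nonpositive integer, so
$$y^{n}\,{}_2F_1\!\left(\tfrac12-\tfrac n2,\,-\tfrac n2;\,2;\,-\tfrac4y\right)=\sum_{m=0}^{\lfloor n/2\rfloor}\frac{\left(\tfrac12-\tfrac n2\right)_m\left(-\tfrac n2\right)_m}{(2)_m\,m!}\,(-4)^{m}\,y^{\,n-m}.$$
Thus the proposition is equivalent to the single scalar identity $\tilde t_{n,\,n-m}=\dfrac{\left(\tfrac12-\tfrac n2\right)_m\left(-\tfrac n2\right)_m}{(2)_m\,m!}\,(-4)^{m}$ for $0\le m\le\lfloor n/2\rfloor$.

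The heart of the argument is the evaluation of the Pochhammer product. Applying the Gauss--Legendre duplication identity $\left(\tfrac a2\right)_m\left(\tfrac{a+1}2\right)_m=4^{-m}(a)_{2m}$ with $a=-n$ gives $\left(\tfrac{1-n}2\right)_m\left(-\tfrac n2\right)_m=4^{-m}(-n)_{2m}=4^{-m}\,\dfrac{n!}{(n-2m)!}$, while $(2)_m=(m+1)!$; hence the right-hand side of the scalar identity collapses to $\pm\,\dfrac{n!}{(m+1)!\,m!\,(n-2m)!}$. On the other side, writing the two binomials in $\tilde t_{n,\,n-m}=\frac{(-1)^{n-m}}{n-m+1}\binom{n}{m}\binom{n-m+1}{m+1}$ as factorials and cancelling the factor $n-m+1$ turns it into $\pm\,\dfrac{n!}{(m+1)!\,m!\,(n-2m)!}$ as well, so the two sides agree up to sign.

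I expect the only delicate point to be exactly this sign bookkeeping: the left side carries $(-1)^{n-m}$ while the hypergeometric term carries $(-1)^{m}$ from $(-4)^{m}4^{-m}$, so matching them turns on the parity of $n$, and this is where one must be careful; all the remaining steps are routine factorial manipulation. An essentially equivalent alternative would be to substitute the closed form for $\tilde t_{n,k}$ directly into $\sum_{k}\tilde t_{n,k}y^{k}$ and recognize the resulting finite sum as a Gauss series, but I would prefer the coefficient-matching route above, since the termination of the series and the duplication step are most transparent there.
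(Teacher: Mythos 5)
Your strategy is the natural one and every step you actually carry out is correct: the support of $\tilde F_{n+1}$, the termination of the series, the duplication identity $\left(\tfrac{a}{2}\right)_m\left(\tfrac{a+1}{2}\right)_m=4^{-m}(a)_{2m}$ with $a=-n$, and the reduction of both sides to $\pm\,n!/\bigl((m+1)!\,m!\,(n-2m)!\bigr)$. (The paper states this proposition without any proof, so there is nothing to compare against there.) The genuine gap is that you stop exactly at the point you yourself flag as ``the only delicate point,'' and that deferred sign check is where the claim breaks. Completing it: the coefficient of $y^{n-m}$ on the left is $(-1)^{n-m}\,n!/\bigl((m+1)!\,m!\,(n-2m)!\bigr)$, while on the right it is $(-1)^{m}\,n!/\bigl((m+1)!\,m!\,(n-2m)!\bigr)$, and these agree precisely when $(-1)^{n}=1$. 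So the identity as printed holds for even $n$ but is off by a global sign for odd $n$. The case $n=1$ already shows this: $\tilde F_2(y)=-y$, whereas $y\,{}_2F_1\left(0,-\tfrac12;2;-\tfrac4y\right)=y$ because the numerator parameter $0$ kills every term past $m=0$; likewise for $n=3$ the right-hand side is $y^3-3y^2$ against $\tilde F_4(y)=-y^3+3y^2$.

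What your computation actually establishes is the corrected statement $\tilde F_{n+1}(y)=(-1)^n y^n\,{}_2F_1\left(\tfrac12-\tfrac n2,\,-\tfrac n2;\,2;\,-\tfrac4y\right)$, equivalently with prefactor $(-y)^n$. No amount of care in the sign bookkeeping can rescue the proposition as written, because it is false for odd $n$; you should either insert the factor $(-1)^n$ or note the restriction to even $n$. As submitted, the proof is incomplete at the one step where the conclusion actually fails.
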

We can express the dual polynomials $\hat{F}_n$ in terms of the matrix $(\tilde{t}_{n,k})$ as follows.
\begin{proposition}
We have
$$\hat{F}_{n+1}(y)=\sum_{k=0}^n \tilde{t}_{n,k}y^{2k-n}.$$
\end{proposition}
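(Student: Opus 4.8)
The plan is to lift the identity to generating functions, where it reduces to a rescaling relation that survives series reversion. Beside $F(x,y)=\frac{x}{1-yx-x^2}$, whose reversion in $x$ is $\hat{F}(x,y)=\sum_{n\ge1}\hat{F}_n(y)x^n$, introduce $\check{F}(x,y)=\frac{x}{1-yx-yx^2}$, namely $x$ times the bivariate generating function $\frac{1}{1-yx(1+x)}$ of the Riordan array $(1,x(1+x))$; exactly as for $F$, the reversion of $\check{F}$ in $x$ is $\tilde{F}(x,y)=\sum_{n\ge1}\tilde{F}_n(y)x^n$ with $\tilde{F}_{n+1}(y)=\sum_k\tilde{t}_{n,k}y^k$. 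Multiplying the asserted identity by $x^n$ and summing over $n\ge0$, and using $y^{2k-n}=y^{-n}(y^2)^k$ together with $\sum_{n\ge0}\hat{F}_{n+1}(y)x^n=\hat{F}(x,y)/x$ and the analogous expression $\frac{y}{x}\,\tilde{F}(x/y,y^2)$ for the right side, I would show that the Proposition is equivalent to the single functional identity
\[ \hat{F}(x,y)=y\,\tilde{F}\!\left(\tfrac{x}{y},\,y^2\right), \]
equivalently $\hat{F}_{n+1}(y)=y^{-n}\tilde{F}_{n+1}(y^2)$.

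The first real step is the elementary observation that $F$ and $\check{F}$ differ only by a rescaling of their two variables:
\[ y\,\check{F}\!\left(\tfrac{x}{y},\,y^2\right)=y\cdot\frac{x/y}{\,1-y^2(x/y)-y^2(x/y)^2\,}=\frac{x}{1-yx-x^2}=F(x,y). \]
Next I would pass this through the reversion. From $F(\hat{F}(x,y),y)=x$ and the rescaling, $\check{F}\big(\hat{F}(x,y)/y,\,y^2\big)=x/y$; on the other hand, substituting $x\mapsto x/y$ and $y\mapsto y^2$ in $\check{F}(\tilde{F}(x,y),y)=x$ gives $\check{F}\big(\tilde{F}(x/y,y^2),\,y^2\big)=x/y$. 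Since $a\mapsto\check{F}(a,y^2)$ is a compositional bijection on formal series in $x$ with zero constant term, these two displays force $\hat{F}(x,y)/y=\tilde{F}(x/y,y^2)$, i.e.\ the functional identity above. Reading off the coefficient of $x^{n+1}$ then gives $\hat{F}_{n+1}(y)=y^{-n}\tilde{F}_{n+1}(y^2)=y^{-n}\sum_k\tilde{t}_{n,k}(y^2)^k=\sum_k\tilde{t}_{n,k}y^{2k-n}$, which is the claim.

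The only genuine subtlety is that $x\mapsto x/y$ manufactures negative powers of $y$: the series $\hat{F}(x,y)/y$, $\check{F}(x/y,y^2)$ and $\tilde{F}(x/y,y^2)$ naturally live in $\mathbb{Q}[y,y^{-1}][[x]]$, so I would run the reversion and the cancellation step in that ring, and only afterwards note that $y\,\tilde{F}(x/y,y^2)$ is in fact a power series in $x$ over $\mathbb{Q}[y]$ — equivalently, that $\tilde{t}_{n,k}=0$ whenever $2k<n$, which is visible from the factor $\binom{k+1}{n-k+1}$ in the closed form for $\tilde{t}_{n,k}$. For a reader who prefers a bare calculation, the same result drops out by comparing coefficients of $y^{n-2m}$: the left side contributes $\hat{t}_{n,n-2m}=(-1)^{n-m}\binom{n}{2m}C_m$ and the right side contributes $\tilde{t}_{n,n-m}=\frac{(-1)^{n-m}}{n-m+1}\binom{n}{m}\binom{n-m+1}{m+1}$ (the parities are compatible because the left side of the Proposition supports only exponents $\equiv n\bmod 2$, and $2k-n\equiv n\bmod 2$), so after cancelling signs everything comes down to the elementary identity $\binom{n}{2m}C_m=\frac{1}{n-m+1}\binom{n}{m}\binom{n-m+1}{m+1}$, immediate on clearing factorials. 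Thus the real work, such as it is, is the bookkeeping of Laurent powers of $y$ through the substitution; the reversion step itself is formal.
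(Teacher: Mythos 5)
Your proof is correct. Note that the paper states this proposition without any proof at all, so there is nothing to compare against; your argument fills a genuine gap. The functional-equation route is the right one and is cleanly executed: the rescaling $F(x,y)=y\,\check{F}(x/y,\,y^2)$ with $\check{F}(x,y)=\frac{x}{1-yx-yx^2}$ is exactly the relation between the two bivariate generating functions $\frac{1}{1-yx-x^2}$ and $\frac{1}{1-yx-yx^2}$ appearing in the paper, and since compositional reversion commutes with such a rescaling (your cancellation step via injectivity of $a\mapsto\check{F}(a,y^2)$ on series without constant term), the identity $\hat{F}_{n+1}(y)=y^{-n}\tilde{F}_{n+1}(y^2)$ follows at once. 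You are also right to flag, and then dispose of, the Laurent-series issue: the substitution $x\mapsto x/y$ must be carried out over $\mathbb{Q}[y,y^{-1}][[x]]$, and polynomiality of the result is equivalent to $\tilde{t}_{n,k}=0$ for $2k<n$, which the factor $\binom{k+1}{n-k+1}$ (or the displayed matrix) confirms. Your backup verification by direct coefficient comparison, reducing to $\binom{n}{2m}C_m=\frac{1}{n-m+1}\binom{n}{m}\binom{n-m+1}{m+1}$, also checks out on clearing factorials, though it leans on the two closed forms for $\hat{t}_{n,k}$ and $\tilde{t}_{n,k}$ that the paper asserts without derivation; the generating-function argument is preferable precisely because it avoids them.
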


The third matrix associated with the Fibonacci polynomials is the matrix $\left(\binom{n-k}{k}\right)$ (which is the one most usually associated with the Fibonacci polynomials). This is the ``stretched'' Riordan array $\left(\frac{1}{1-x}, \frac{x^2}{1-x}\right)$, which begins
$$\left(
\begin{array}{cccccc}
 1 & 0 & 0 & 0 & 0 & 0 \\
 1 & 0 & 0 & 0 & 0 & 0 \\
 1 & 1 & 0 & 0 & 0 & 0 \\
 1 & 2 & 0 & 0 & 0 & 0 \\
 1 & 3 & 1 & 0 & 0 & 0 \\
 1 & 4 & 3 & 0 & 0 & 0 \\
\end{array}
\right).$$ This matrix is \seqnum{A011973} in the OEIS.
Its generating function is given by
$$\frac{\frac{1}{1-x}}{1-y\frac{x^2}{1-x}}=\frac{1}{1-x-yx^2}.$$ To find the inversion of this matrix, we solve the equation
$$\frac{u}{1-u-yu^2}=x$$ to get
$$\frac{u}{x}=\frac{\sqrt{1+2x+(1+4y)x^2}-x-1}{2yx^2}$$ as the generating function of the inversion. This expands to give the matrix $\left(\tilde{\tilde{t}}_{n,k}\right)$ that begins
$$\left(
\begin{array}{cccccc}
 1 & 0 & 0 & 0 & 0 & 0 \\
 -1 & 0 & 0 & 0 & 0 & 0 \\
 1 & -1 & 0 & 0 & 0 & 0 \\
 -1 & 3 & 0 & 0 & 0 & 0 \\
 1 & -6 & 2 & 0 & 0 & 0 \\
 -1 & 10 & -10 & 0 & 0 & 0 \\
\end{array}
\right).$$
This matrix is the coefficient array of the polynomials $\tilde{\tilde{F}}_n(y)$ with
$$\tilde{\tilde{F}}_0(y)=0, \tilde{\tilde{F}}_1(y)=1, \tilde{\tilde{F}}_2(y)=-1,\tilde{\tilde{F}}_3(y)=1-y,
\tilde{\tilde{F}}_4(y)=3y-1,\tilde{\tilde{F}}_5(y)=2y^2-6y+1,\ldots.$$ The general term of this matrix is
$$\tilde{\tilde{t}}_{n,k}=\binom{n}{2k}C_k(-1)^{n-k}.$$  The nonnegative matrix $\left(\binom{n}{2k}C_k\right)$ is \seqnum{A055151}, which counts the number of Motzkin paths of length $n$ with $k$ up steps. We an express the dual Fibonacci polynomials $\hat{F}_n(y)$ in terms of this matrix as follows.
\begin{proposition} We have  $$\hat{F}_{n+1}(y)=\sum_{k=0}^{\lfloor \frac{n}{2} \rfloor} \tilde{\tilde{t}}_{n,k}y^{n-2k}=\sum_{k=0}^{\lfloor \frac{n}{2} \rfloor} \binom{n}{2k}C_k(-1)^{n-k}y^{n-2k}.$$
\end{proposition}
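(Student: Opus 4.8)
The plan is to derive the stated formula by a simple re-indexing of the expansion of $\hat{F}_{n+1}(y)$ that is already in hand from the exponential Riordan array $\left[\frac{I_1(2ix)}{ix},-x\right]$; no new analytic ingredient is required. I would begin from
$$\hat{F}_{n+1}(y)=\sum_{k=0}^n \hat{t}_{n,k}\,y^k,\qquad \hat{t}_{n,k}=\binom{n}{k}C_{\frac{n-k}{2}}(-1)^{\frac{n+k}{2}}\frac{1+(-1)^{n-k}}{2},$$
and note that the parity factor $\frac{1+(-1)^{n-k}}{2}$ annihilates every term with $n-k$ odd, so the sum effectively runs over the indices $k=n-2j$ with $0\le j\le\lfloor n/2\rfloor$ only.

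The next step is to substitute $k=n-2j$ and simplify the surviving factors: $\binom{n}{k}=\binom{n}{n-2j}=\binom{n}{2j}$, the Catalan subscript becomes $\frac{n-k}{2}=j$, the sign is $(-1)^{\frac{n+k}{2}}=(-1)^{n-j}$, and the parity factor is $1$. Hence the $j$-th term equals $\binom{n}{2j}C_j(-1)^{n-j}\,y^{n-2j}$; recognising $\binom{n}{2j}C_j(-1)^{n-j}$ as precisely $\tilde{\tilde{t}}_{n,j}$ and summing over $j$ gives
$$\hat{F}_{n+1}(y)=\sum_{j=0}^{\lfloor n/2\rfloor}\tilde{\tilde{t}}_{n,j}\,y^{n-2j}=\sum_{j=0}^{\lfloor n/2\rfloor}\binom{n}{2j}C_j(-1)^{n-j}\,y^{n-2j},$$
which is the assertion.

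There is no genuinely hard step here; the only point that needs care is the bookkeeping of parities, together with the observation that the half-integer Catalan number $C_{(n-k)/2}$ appearing when $n-k$ is odd never actually contributes, since it is multiplied by the vanishing parity factor. As a check I would confirm the first few instances against the tabulated $\hat{F}_1=1,\ \hat{F}_2=-y,\ \hat{F}_3=y^2-1,\ \hat{F}_4=-y^3+3y$.

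Should a proof independent of the closed form for $\hat{t}_{n,k}$ be wanted, an alternative is a direct generating-function check: using $\sum_{n\ge0}\binom{n}{2j}x^n=\frac{x^{2j}}{(1-x)^{2j+1}}$ and the Catalan generating function $c(w)=\frac{1-\sqrt{1-4w}}{2w}$, one shows that $\sum_{n\ge0}\bigl(\sum_{j}\binom{n}{2j}C_j(-1)^{n-j}y^{n-2j}\bigr)x^n$ collapses to $\frac{1}{1+xy}\,c\!\left(\frac{-x^2}{(1+xy)^2}\right)$, and simplifying the radical out of $c$ identifies this with $\frac{u(x,y)}{x}$, where $u(x,y)$ is the series reversion of $\frac{x}{1-yx-x^2}$, i.e.\ the generating function of the dual Fibonacci polynomials. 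In that route the one delicate issue is choosing the branch of the square root so that the resulting series has the correct order of vanishing in $x$.
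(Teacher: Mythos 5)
Your argument is correct: the parity factor in $\hat{t}_{n,k}=\binom{n}{k}C_{\frac{n-k}{2}}(-1)^{\frac{n+k}{2}}\frac{1+(-1)^{n-k}}{2}$ kills the odd $n-k$ terms, and the substitution $k=n-2j$ turns the surviving terms into exactly $\binom{n}{2j}C_j(-1)^{n-j}y^{n-2j}=\tilde{\tilde{t}}_{n,j}\,y^{n-2j}$, matching the tabulated $\hat{F}_1,\dots,\hat{F}_4$. The paper states this proposition without any proof, and your re-indexing from the closed form of $\hat{t}_{n,k}$ (which the paper supplies earlier) is precisely the argument it implicitly relies on, so nothing further is needed.
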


\section{Catalan-Fibonacci polynomials and their duals}
The Catalan-Fibonacci polynomials are obtained by scaling the Fibonacci polynomials by the Catalan numbers.
Thus we set $CF_n(y)=C_{n-1}F_n(y)$.
In order to explore this concept, we first look at the relevant generating functions. We have the following result in this direction.
\begin{proposition} We have
$$[x^{n+1}] \Rev\left(x(\sqrt{1-4bx^2}-ax)\right)=C_n \sum_{i=0}^{\lfloor \frac{n}{2} \rfloor}\binom{n-i}{i}a^{n-2i}b^i.$$
\end{proposition}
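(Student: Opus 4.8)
The plan is to determine the reverted series in closed form by reading it off from the claimed right-hand side, then confirming that it satisfies the equation defining the reversion, and finally extracting the coefficient of $x^{n+1}$. First I would unpack the right-hand side: from $\frac{1}{1-ax-bx^2}=\sum_{m\ge0}x^m(a+bx)^m$ one gets $[x^n]\frac{1}{1-ax-bx^2}=\sum_{i=0}^{\lfloor n/2\rfloor}\binom{n-i}{i}a^{n-2i}b^i$, while factoring $1-ax-bx^2=(1-px)(1-qx)$ with $p+q=a$, $pq=-b$ gives $[x^n]\frac{1}{1-ax-bx^2}=\sum_{i+j=n}p^iq^j=\frac{p^{n+1}-q^{n+1}}{p-q}$. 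Hence, writing $c(x)=\frac{1-\sqrt{1-4x}}{2x}=\sum_{n\ge0}C_nx^n$ for the Catalan generating function, the series we must produce is
$$G(x):=\sum_{n\ge0}C_n\frac{p^{n+1}-q^{n+1}}{p-q}x^{n+1}=\frac{x}{p-q}\bigl(p\,c(px)-q\,c(qx)\bigr)=\frac{\sqrt{1-4qx}-\sqrt{1-4px}}{2(p-q)},$$
the last step using $zc(z)=\tfrac12\bigl(1-\sqrt{1-4z}\bigr)$. (Here $p,q$, equivalently $\delta=\sqrt{a^2+4b}$, are adjoined formally; since every displayed form of $G$ is symmetric under $p\leftrightarrow q$, we have $G\in\mathbb{Q}[a,b][[x]]$.)

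Next I would verify that $G$ is the reversion of $h(x):=x\bigl(\sqrt{1-4bx^2}-ax\bigr)$. Because $h(x)=x+O(x^2)$, the formal reversion exists and is unique, and $G(x)=x+O(x^2)$, so it suffices to check $h(G)=x$. Setting $A=\sqrt{1-4px}$ and $B=\sqrt{1-4qx}$, one has $B-A=2(p-q)G$, and combining this with $B^2-A^2=4(p-q)x$ forces $B+A=2x/G$; hence $A=\frac{x}{G}-(p-q)G$ and $B=\frac{x}{G}+(p-q)G$. Adding the squares and using $A^2+B^2=2-4ax$ together with $(p-q)^2=a^2+4b$ yields the key relation
$$x^2=G^2-2axG^2-(a^2+4b)G^4.$$
Now $h(G)=x$ is equivalent to $G\sqrt{1-4bG^2}=x+aG^2$, and since both sides are $x+O(x^2)$ it is enough to check the squared form $G^2\bigl(1-4bG^2\bigr)=(x+aG^2)^2$; expanding the right-hand side and substituting the key relation for $x^2$ collapses it to $G^2-4bG^4$, which is exactly the left-hand side. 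Therefore $G=\Rev(h)$, and reading off $[x^{n+1}]$ from $G=\frac{x}{p-q}\bigl(p\,c(px)-q\,c(qx)\bigr)$ gives $[x^{n+1}]\Rev(h)=C_n\frac{p^{n+1}-q^{n+1}}{p-q}=C_n\sum_{i=0}^{\lfloor n/2\rfloor}\binom{n-i}{i}a^{n-2i}b^i$, the asserted identity.

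The computation is short once the substitution $A=\frac{x}{G}\mp(p-q)G$ is in hand; the two points that need care are staying inside $\mathbb{Q}[a,b][[x]]$ while passing through the roots $p,q$ (handled by the $p\leftrightarrow q$ symmetry) and fixing the sign when deducing $h(G)=x$ from the squared equation (handled by matching lowest-order terms, both sides being $x+O(x^2)$). I would deliberately avoid the seemingly more direct route through Lagrange inversion, which reduces the claim to evaluating $\frac{1}{n+1}[t^n]\bigl(\sqrt{1-4bt^2}-at\bigr)^{-(n+1)}$: rationalizing and expanding that coefficient produces an awkward triple sum, whereas the guess-and-verify argument needs only the elementary manipulation above.
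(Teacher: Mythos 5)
Your proof is correct, but it takes a genuinely different route from the paper. The paper applies Lagrange inversion directly, writing $[x^{n+1}]\Rev(h)=\frac{1}{n+1}[x^n]\left(\sqrt{1-4bx^2}-ax\right)^{-(n+1)}$ and then grinding through a chain of binomial-coefficient manipulations (negating upper indices, handling the half-integer binomials $\binom{n-1/2}{i}$, and collapsing a double sum) to reach $\frac{1}{n+1}\binom{2n}{n}\binom{n-i}{i}$. You instead reverse-engineer the answer: using the factorization $1-ax-bx^2=(1-px)(1-qx)$ you recognize the claimed coefficients as $C_n\frac{p^{n+1}-q^{n+1}}{p-q}$, package the candidate reversion as $G=\frac{\sqrt{1-4qx}-\sqrt{1-4px}}{2(p-q)}$, and verify $h(G)=x$ by the clean algebraic identity $x^2=G^2-2axG^2-(a^2+4b)G^4$. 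Both arguments are sound; I checked your key relation (it follows from $A^2+B^2=2-4ax$ exactly as you say) and your handling of the two delicate points, the sign of the square root and the descent from $\mathbb{Q}[a,b][p][[x]]$ back to $\mathbb{Q}[a,b][[x]]$ via the $p\leftrightarrow q$ symmetry, is correct. What your route buys is twofold: it avoids the fragile binomial gymnastics (which in the paper are the only place an error could realistically hide), and it delivers for free the closed form $\Rev\left(x(\sqrt{1-4bx^2}-ax)\right)=\frac{\sqrt{1-2ax-\sqrt{1-4ax-16bx^2}}}{\sqrt{2}\sqrt{a^2+4b}}$, which the paper obtains separately afterwards by solving the defining quartic; your $(B-A)^2$ computation is exactly that corollary. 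What the paper's route buys is independence from knowing the answer in advance: Lagrange inversion computes the coefficient from scratch, whereas your argument is a verification and would not have discovered the formula. Since the statement to be proved supplies the answer, this is not a defect here.
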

\begin{proof}
The proof uses Lagrange Inversion \cite{H2, LI}. We have
\begin{align*}
[x^{n+1}] \Rev\left(x(\sqrt{1-4bx^2}-ax)\right)&=\frac{1}{n+1}[x^n] \left(\sqrt{1-4bx^2}-ax\right)^{-(n+1)}\\
&=\frac{1}{n+1}[x^n] \sum_{j=0}^{\infty} \binom{-(n+1)}{j}(1-4bx^2)^{\frac{j}{2}}(-ax)^{-(n+1)-j}\\
&=\frac{1}{n+1}[x^n] \sum_{j=0}^{\infty} \binom{n+j}{j}(-1)^j \sum_{i=0}^{\frac{j}{2}} \binom{\frac{j}{2}}{i}(-4b)^i x^{2i}(-ax)^{-n-j-1}\\
&=\frac{1}{n+1}\sum_{i \ge 0} \binom{\frac{2i-2n-1}{2}}{i}(-4b)^i \binom{2i-n-1}{2i-2n-1}(-a)^{n-2i}\\
&=\frac{1}{n+1} \sum_{i \ge 0} \binom{-\left(\frac{2n-2i+1}{2}\right)}{i}(-4b)^i \binom{2i-n-1}{n}(-a)^{n-2i}\\
&=\frac{1}{n+1} \sum_{i \ge 0} \binom{\frac{2n-2i+1}{2}+i-1}{i}(4b)^i \binom{-(n-2i+1)}{n}(-a)^{n-2i}\\
&=\frac{1}{n+1}  \sum_{i \ge 0} \binom{n-\frac{1}{2}}{i}(4b)^i \binom{n-2i+1+n-1}{n}(-1)^n(-a)^{n-2i}\\
&=\frac{1}{n+1} \sum_{i \ge 0} \binom{n-\frac{1}{2}}{i}\binom{2n-2i}{n}4^i a^{n-2i}b^i\\
&=\frac{1}{n+1} \sum_{i=0}^{\lfloor \frac{n}{2} \rfloor} \binom{2n}{n} \binom{n-i}{i}a^{n-2i}b^i\\
&=C_n \sum_{i=0}^{\lfloor \frac{n}{2} \rfloor} \binom{n-i}{i}a^{n-2i}b^i.\end{align*}
\end{proof}
\begin{corollary} The generating function of the Catalan-Fibonacci polynomial sequence $C_n F_{n+1}(y)$ is given by
$$\frac{1}{x}\Rev\left(x(\sqrt{1-4yx^2}-x)\right).$$
\end{corollary}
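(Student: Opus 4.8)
The plan is to obtain the corollary as a direct specialization of the Proposition proved just above, combined with the closed form for the Fibonacci polynomials recorded in Section~1 and a trivial shift of index; no genuinely new computation is needed, since the hard work, namely the Lagrange inversion, has already been carried out.

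First I would put $a=1$ and $b=y$ in that Proposition, which gives
\[
[x^{n+1}]\,\Rev\!\left(x(\sqrt{1-4yx^2}-x)\right)=C_n\sum_{i=0}^{\lfloor n/2\rfloor}\binom{n-i}{i}y^i .
\]
Then I would recognize the sum on the right as $F_{n+1}(y)$, using the identity $F_{n+1}(y)=\sum_{i=0}^{\lfloor n/2\rfloor}\binom{n-i}{i}y^i$ established earlier, and invoke the definition $CF_{n}(y)=C_{n-1}F_n(y)$ to rewrite the right-hand side as $C_nF_{n+1}(y)=CF_{n+1}(y)$. Finally, since $g(x):=x(\sqrt{1-4yx^2}-x)$ satisfies $g(0)=0$ and $g'(0)=1$, its series reversion $\Rev(g)$ is a well-defined formal power series with zero constant term, so that division by $x$ is legitimate and merely lowers the coefficient index by one:
\[
[x^{n}]\,\frac{1}{x}\Rev\!\left(x(\sqrt{1-4yx^2}-x)\right)=[x^{n+1}]\,\Rev\!\left(x(\sqrt{1-4yx^2}-x)\right)=CF_{n+1}(y),
\]
valid for every $n\ge 0$. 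This says precisely that $\frac{1}{x}\Rev\!\left(x(\sqrt{1-4yx^2}-x)\right)=\sum_{n\ge 0}CF_{n+1}(y)\,x^n$, i.e.\ it is the generating function of the Catalan--Fibonacci polynomial sequence, as claimed.

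There is essentially no obstacle here beyond the bookkeeping above, the substance being entirely carried by the Proposition. The only points meriting a word of justification are that the substitution $a=1$, $b=y$ is consistent with the way $a$ and $b$ enter $g$ (through the $ax$ summand and the $\sqrt{1-4bx^2}$ factor, respectively), and that the series reversion together with the ensuing division by $x$ are legitimate operations on formal power series, which follows from $g'(0)=1\neq 0$.
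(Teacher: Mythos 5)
Your proposal is correct and is exactly the argument the paper intends: the corollary is the immediate specialization $a=1$, $b=y$ of the preceding Proposition, combined with the identity $F_{n+1}(y)=\sum_{i=0}^{\lfloor n/2\rfloor}\binom{n-i}{i}y^i$ and the index shift from dividing by $x$. The paper offers no written proof because it regards this as immediate, and your bookkeeping (including the check that the reversion is a well-defined power series with zero constant term) fills in precisely those omitted details.
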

In order to get a closed expression for $\Rev\left(x(\sqrt{1-4bx^2}-ax)\right)$, we solve the equation
$$u(\sqrt{1-4bu^2}-au)=x$$ and we take the solution with $u(0)=0$. We find that
$$\Rev\left(x(\sqrt{1-4bx^2}-ax)\right)=\frac{\sqrt{1-2ax-\sqrt{1-4a x-16 bx^2}}}{\sqrt{2} \sqrt{a^2+4 b}}.$$
The following result is immediate.
\begin{corollary}
The generating function of the Catalan-Fibonacci polynomials $CF_n(y)$ is given by
$$\frac{\sqrt{1- 2x -\sqrt{1-4x -16 y x^2}}}{\sqrt{2} \sqrt{1+4 y}}.$$
\end{corollary}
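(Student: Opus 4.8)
The plan is to obtain this straight from the corollary on the generating function of the sequence $C_nF_{n+1}(y)$ and the explicit evaluation of $\Rev\!\big(x(\sqrt{1-4bx^2}-ax)\big)$ displayed just above the statement; as the paper signals, only a specialization of parameters and a shift of index are needed.

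First I would fix the conventions. Since $CF_n(y)=C_{n-1}F_n(y)$ and $F_0(y)=0$, it is natural to put $CF_0(y)=0$, so that the ordinary generating function $\sum_{n\ge 0}CF_n(y)x^n$ equals $\sum_{n\ge 0}C_nF_{n+1}(y)x^{n+1}$. The corollary identifying the generating function of the sequence $C_nF_{n+1}(y)$ as $\tfrac1x\Rev\!\big(x(\sqrt{1-4yx^2}-x)\big)$ then gives, after multiplication by $x$, the identity $\sum_{n\ge 0}CF_n(y)x^n=\Rev\!\big(x(\sqrt{1-4yx^2}-x)\big)$. (One can also read this off the Proposition evaluating $[x^{n+1}]\Rev\!\big(x(\sqrt{1-4bx^2}-ax)\big)$: specialized to $a=1$, $b=y$ and combined with $F_{n+1}(y)=\sum_{i=0}^{\lfloor n/2\rfloor}\binom{n-i}{i}y^i$, it yields $[x^{n+1}]\Rev\!\big(x(\sqrt{1-4yx^2}-x)\big)=C_nF_{n+1}(y)=CF_{n+1}(y)$ for every $n\ge 0$.)

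It then remains to set $a=1$ and $b=y$ in the closed form $\Rev\!\big(x(\sqrt{1-4bx^2}-ax)\big)=\dfrac{\sqrt{1-2ax-\sqrt{1-4ax-16bx^2}}}{\sqrt2\,\sqrt{a^2+4b}}$, which turns $a^2+4b$ into $1+4y$, the inner radicand into $1-4x-16yx^2$, and the numerator into $\sqrt{1-2x-\sqrt{1-4x-16yx^2}}$, producing exactly the asserted generating function. The only point calling for care is the branch of the nested radical: one should check that the right-hand side, expanded in $x$, begins $x+O(x^2)$, so that it is the series-reversion branch with value $0$ at $x=0$ that was chosen in the earlier display. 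This is a one-line verification, since near $x=0$ one has $\sqrt{1-4x-16yx^2}=1-2x+O(x^2)$, whence $1-2x-\sqrt{1-4x-16yx^2}=O(x^2)$ with leading coefficient $2+8y$, and dividing its square root by $\sqrt2\,\sqrt{1+4y}=\sqrt{2+8y}$ leaves leading term $x$. Beyond this bookkeeping I anticipate no genuine obstacle.
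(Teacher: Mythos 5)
Your proposal is correct and follows exactly the route the paper intends (the paper simply labels the corollary ``immediate'' after displaying the closed form for $\Rev\left(x(\sqrt{1-4bx^2}-ax)\right)$): set $a=1$, $b=y$, and account for the index shift between $CF_n(y)$ and $C_nF_{n+1}(y)$. Your check that the chosen branch of the nested radical expands as $x+O(x^2)$ is a worthwhile detail that the paper leaves implicit.
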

Regarded as the bivariate generating function in $x$ and $y$, this generating function expands to give the matrix that begins
$$\left(
\begin{array}{cccccc}
 1 & 0 & 0 & 0 & 0 & 0 \\
 1 & 0 & 0 & 0 & 0 & 0 \\
 2 & 2 & 0 & 0 & 0 & 0 \\
 5 & 10 & 0 & 0 & 0 & 0 \\
 14 & 42 & 14 & 0 & 0 & 0 \\
 42 & 168 & 126 & 0 & 0 & 0 \\
\end{array}
\right).$$
We define the \emph{dual Catalan-Fibonacci polynomials} $\hat{FC}_n(y)$ to be the sequence of polynomials whose generating function is given by the series reversion of that of the Catalan-Fibonacci polynomials. Thus we have that the generating function of the  dual Catalan-Fibonacci polynomials is given by
$$x(\sqrt{1-4yx^2}-x).$$
These polynomials therefore start
$$0, 1, -1, - 2y, 0, - 2y^2, 0, - 4y^3, 0, - 10y^4, 0,\ldots.$$
It is interesting to note the simple form of these polynomials, which are defined essentially by the Catalan numbers, since we have
$$(2,2,4,10,\ldots)=2(1,1,2,5,\ldots).$$
In terms of the inversion of coefficient matrices, we have the following.
$$\left(
\begin{array}{cccccc}
 1 & 0 & 0 & 0 & 0 & 0 \\
 1 & 0 & 0 & 0 & 0 & 0 \\
 2 & 2 & 0 & 0 & 0 & 0 \\
 5 & 10 & 0 & 0 & 0 & 0 \\
 14 & 42 & 14 & 0 & 0 & 0 \\
 42 & 168 & 126 & 0 & 0 & 0 \\
\end{array}
\right)^!=\left(
\begin{array}{cccccc}
 1 & 0 & 0 & 0 & 0 & 0 \\
 -1 & 0 & 0 & 0 & 0 & 0 \\
 0 & -2 & 0 & 0 & 0 & 0 \\
 0 & 0 & 0 & 0 & 0 & 0 \\
 0 & 0 & -2 & 0 & 0 & 0 \\
 0 & 0 & 0 & 0 & 0 & 0 \\
\end{array}
\right).$$
\begin{example}
The sequence $\hat{CF}_{n+1}(1)$ begins
$$1, -1, -2, 0, -2, 0, -4, 0, -10, 0, -28, 0, -84, 0, -264, 0, -858,0,\ldots.$$
The Hankel transform of this sequence begins
$$1, -3, 14, -32, 96, -208, 544, -1152, 2816, -5888,\ldots.$$
This has generating function
$$\frac{1-x+4x^2}{(1-2x)(1+2x)^2}.$$
The sequence $\hat{CF}_{n+1}(-1)$ begins
$$1, -1, 2, 0, -2, 0, 4, 0, -10, 0, 28, 0, -84, 0, 264, 0, -858,0,\ldots.$$
The Hankel transform of this sequence begins
$$1, 1, -10, -16, 64, 112, -352, -640, 1792, 3328,\ldots.$$
and it has generating function
$$\frac{1+x-2x^2-8x^3}{(1+4x^2)^2}.$$
\end{example}
\section{The Catalan-Fibonacci matrix}
We have $CF_{n+1}(y)=C_n \sum_{i=0}^{\lfloor \frac{n}{2} \rfloor}\binom{n-i}{i}a^{n-2i}b^i$. The sequence
$CF_{n+1}(y)$ begins
$$1, a, 2(a^2 + b), 5a(a^2 + 2b), 14(a^4 + 3a^2b + b^2), 42a(a^4 + 4a^2b + 3b^2),\ldots.$$
In matrix terms, we can express this in two ways. We have
$$\left(
\begin{array}{cccccc}
 1 & 0 & 0 & 0 & 0 & 0 \\
 a & 0 & 0 & 0 & 0 & 0 \\
 2 a^2 & 2 & 0 & 0 & 0 & 0 \\
 5 a^3 & 10 a & 0 & 0 & 0 & 0 \\
 14 a^4 & 42 a^2 & 14 & 0 & 0 & 0 \\
 42 a^5 & 168 a^3 & 126 a & 0 & 0 & 0 \\
\end{array}
\right)
\left(
\begin{array}{c}
1\\
b\\
b^2\\
b^3\\
b^4\\
b^5 \\ \end{array}
\right)=\left(
\begin{array}{c}
1\\
a\\
2(a^2 + b)\\
5a(a^2 + 2b\\
14(a^4 + 3a^2b + b^2\\
42a(a^4 + 4a^2b + 3b^2) \\ \end{array}
\right),$$ and
$$\left(
\begin{array}{cccccc}
 1 & 0 & 0 & 0 & 0 & 0 \\
 0 & 1 & 0 & 0 & 0 & 0 \\
 2 b & 0 & 2 & 0 & 0 & 0 \\
 0 & 10 b & 0 & 5 & 0 & 0 \\
 14 b^2 & 0 & 42 b & 0 & 14 & 0 \\
 0 & 126 b^2 & 0 & 168 b & 0 & 42 \\
\end{array}
\right)
\left(
\begin{array}{c}
1\\
a\\
a^2\\
a^3\\
a^4\\
a^5 \\ \end{array}
\right)=\left(
\begin{array}{c}
1\\
a\\
2(a^2 + b)\\
5a(a^2 + 2b\\
14(a^4 + 3a^2b + b^2\\
42a(a^4 + 4a^2b + 3b^2) \\ \end{array}
\right).$$
We call the matrix for $b=1$ that begins
$$\left (
  \begin {array} {cccccc}
                     1 & 0 & 0 & 0 & 0 & 0 \\
                     0 & 1 & 0 & 0 & 0 & 0 \\
                     2 & 0 & 2 & 0 & 0 & 0 \\
                     0 & 10  & 0 & 5 & 0 & 0 \\
                     14  & 0 & 42  & 0 & 14 & 0 \\
              0 & 126  & 0 & 168  & 0 & 42 \\
    \end {array}
   \right)$$
the \emph{Catalan-Fibonacci matrix}. Its generating function is
$$\frac{\sqrt{1- 2yx - \sqrt{1-4yx-16x^2}}}{\sqrt{2(y+4)}}.$$
Its row sums are the numbers $C_nF_{n+1}$, which gives the sequence \seqnum{A098614} in the OEIS.
The inversion of the Catalan-Fibonacci matrix is the matrix that begins
$$\left(
\begin{array}{cccccc}
 1 & 0 & 0 & 0 & 0 & 0 \\
 0 & -1 & 0 & 0 & 0 & 0 \\
 -2 & 0 & 0 & 0 & 0 & 0 \\
 0 & 0 & 0 & 0 & 0 & 0 \\
 -2 & 0 & 0 & 0 & 0 & 0 \\
 0 & 0 & 0 & 0 & 0 & 0 \\
\end{array}
\right).$$
Here, the first column is the sequence
$$1, 0, -2, 0, -2, 0, -4, 0, -10, 0, -28,0,\ldots.$$

When $b=2$, we get the matrix that begins
$$\left(
\begin{array}{cccccc}
 1 & 0 & 0 & 0 & 0 & 0 \\
 0 & 1 & 0 & 0 & 0 & 0 \\
 4 & 0 & 2 & 0 & 0 & 0 \\
 0 & 20 & 0 & 5 & 0 & 0 \\
 56 & 0 & 84 & 0 & 14 & 0 \\
 0 & 504 & 0 & 336 & 0 & 42 \\
\end{array}
\right).$$ We call this the \emph{Catalan-Jacobsthal} matrix. Its row sums are the product of the Catalan numbers and the Jacobsthal numbers. This row sum sequence is sequence \seqnum{A200375} in the OEIS.

\section{The generating function $\frac{1}{\sqrt{1-4bx^2}-ax}$}
To explore the reciprocal of the generating function $\sqrt{1-4bx^2}-ax$ we consider the Riordan array
$$\left(\frac{1}{\sqrt{1-4bx^2}}, \frac{x}{\sqrt{1-4bx^2}}\right).$$
By the fundamental theorem of Riordan arrays, we have

\begin{align*}
\left(\frac{1}{\sqrt{1-4bx^2}}, \frac{x}{\sqrt{1-4bx^2}}\right)\cdot \frac{1}{1-ax}&=
\frac{1}{\sqrt{1-4bx^2}} \frac{1}{1-a \frac{x}{\sqrt{1-4bx^2}}}\\ &=
\frac{1}{\sqrt{1-4bx^2}-ax}.\end{align*}

Equivalently, we have
$$\frac{1}{\sqrt{1-4bx^2}-ax}=\left(\frac{1}{\sqrt{1-4bx^2}}, \frac{x}{\sqrt{1-4bx^2}}\right)\cdot \frac{1}{1-ax}=\left(\frac{1}{\sqrt{1-4bx^2}}, \frac{ax}{\sqrt{1-4bx^2}}\right)\cdot \frac{1}{1-x}.$$
This gives us the following result.
\begin{proposition} The generating function $\frac{1}{\sqrt{1-4bx^2}-ax}$ is the generating function of the row sums of the Riordan array $\left(\frac{1}{\sqrt{1-4bx^2}}, \frac{ax}{\sqrt{1-4bx^2}}\right)$.
\end{proposition}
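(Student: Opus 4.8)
The plan is to read off the row sum generating function of the Riordan array directly from its definition, and then recognize that the required algebraic simplification is exactly the one carried out in the display immediately preceding the statement. Recall that for a (proper) Riordan array $(g(x), f(x))$ with $f(0) = 0$, the $(n,k)$ entry is $[x^n]\, g(x) f(x)^k$, so the generating function of the row sums $r_n = \sum_{k=0}^n [x^n]\, g(x) f(x)^k$ is
$$\sum_{k \ge 0} g(x) f(x)^k = \frac{g(x)}{1 - f(x)},$$
the sum being legitimate in the ring of formal power series because $f(0) = 0$ forces $f(x)^k$ to have order at least $k$ in $x$. Equivalently, the row sums are obtained by multiplying the array on the right by the column vector whose generating function is $\frac{1}{1-x}$, which is precisely the content of the fundamental theorem of Riordan arrays as invoked above.

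First I would specialize this to $g(x) = \frac{1}{\sqrt{1-4bx^2}}$ and $f(x) = \frac{ax}{\sqrt{1-4bx^2}}$, noting that $f(0) = 0$, so the array $\left(\frac{1}{\sqrt{1-4bx^2}}, \frac{ax}{\sqrt{1-4bx^2}}\right)$ is a genuine Riordan array and the row-sum formula applies. Then I would compute
$$\frac{g(x)}{1-f(x)} = \frac{\frac{1}{\sqrt{1-4bx^2}}}{1 - \frac{ax}{\sqrt{1-4bx^2}}} = \frac{1}{\sqrt{1-4bx^2} - ax},$$
by clearing the common factor $\sqrt{1-4bx^2}$ from numerator and denominator. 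This is exactly the identity displayed just before the statement, now reinterpreted through the row-sum construction rather than through the product $(g,f)\cdot \frac{1}{1-ax}$.

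Since every step is a direct substitution followed by elementary algebra, I do not anticipate a genuine obstacle; the only point deserving a word of care is the verification that $f$ has zero constant term, so that the geometric series $\sum_{k\ge 0} f(x)^k$ is a well-defined formal power series and the Riordan array is proper. As the excerpt itself remarks, the result is then immediate.
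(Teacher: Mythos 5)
Your proposal is correct and follows essentially the same route as the paper: the paper also applies the fundamental theorem of Riordan arrays, computing $\left(\frac{1}{\sqrt{1-4bx^2}}, \frac{ax}{\sqrt{1-4bx^2}}\right)\cdot\frac{1}{1-x} = \frac{g(x)}{1-f(x)} = \frac{1}{\sqrt{1-4bx^2}-ax}$ in the display immediately preceding the statement. Your added remark that $f(0)=0$ guarantees the formal geometric series converges is a harmless (and slightly more careful) elaboration of the same argument.
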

The array $\left(\frac{1}{\sqrt{1-4bx^2}}, \frac{ax}{\sqrt{1-4bx^2}}\right)$ is thus the coefficient array of the bivariate polynomials in $a$ and $b$ that begin
$$1, a, a^2 + 2b, a^3 + 4ab, a^4 + 6a^2b + 6b^2, a^5 + 8a^3b + 16ab^2, a^6 + 10a^4b + 30a^2b^2 + 20b^3,\ldots.$$
Specializing to the case $b=y$ and $a=1$, which is the case of the Catalan-Fibonacci polynomials, we find that these ``reciprocal'' polynomials begin
$$1, 1, 2y + 1, 4y + 1, 6y^2 + 6y + 1, 16y^2 + 8y + 1, 20y^3 + 30y^2 + 10y + 1,\ldots.$$
The Riordan array $\left(\frac{1}{\sqrt{1-4x^2}}, \frac{x}{\sqrt{1-4x^2}}\right)$ begins
$$\left(
\begin{array}{cccccc}
 1 & 0 & 0 & 0 & 0 & 0 \\
 0 & 1 & 0 & 0 & 0 & 0 \\
 2 & 0 & 1 & 0 & 0 & 0 \\
 0 & 4 & 0 & 1 & 0 & 0 \\
 6 & 0 & 6 & 0 & 1 & 0 \\
 0 & 16 & 0 & 8 & 0 & 1 \\
\end{array}
\right).$$
This is \seqnum{A111959} in the OEIS. Since this is a Bell matrix, and since we have $\Rev\left(\frac{x}{\sqrt{1-4x^2}}\right)=\Rev\left(\frac{x}{\sqrt{1+4x^2}}\right)$, we deduce that its inversion is the exponential Riordan array
$$\left[I_0(2ix),-x\right],$$ which begins
$$\left(
\begin{array}{cccccc}
 1 & 0 & 0 & 0 & 0 & 0 \\
 0 & -1 & 0 & 0 & 0 & 0 \\
 -2 & 0 & 1 & 0 & 0 & 0 \\
 0 & 6 & 0 & -1 & 0 & 0 \\
 6 & 0 & -12 & 0 & 1 & 0 \\
 0 & -30 & 0 & 20 & 0 & -1 \\
\end{array}
\right).$$
The corresponding nonnegative matrix $\left[I_0(2x),x\right]$ is \seqnum{A109187} in the OEIS. Its elements count grand Motzkin paths of length $n$ with $k$ level steps.

\section{Conclusion} The Fibonacci polynomials are related to the number of ways we can tile an $n \times 1$ rectangle by $2 \times 1$ dominoes and $1 \times 1$ squares \cite{Benjamin}. In this paper we have indicated that the dual Fibonacci and Catalan-Fibonacci polynomials have interpretations in terms of Motzkin paths. In this optic, for instance, a Motzkin path of length $n$ with $k$ horizontal steps is ``dual'' to a tiling of the $n \times 1$ board by dominoes and exactly $k$ $1 \times 1$ square. We have used the theory of triangle inversions, and particularly Riordan array inversions, as the principal tool in this investigation.

\bigskip
\hrule

\noindent 2010 {\it Mathematics Subject Classification}: Primary
11B39; Secondary 11B83, 15B36, 11C20.
\noindent \emph{Keywords:} Fibonacci polynomials, Catalan numbers, Catalan-Fibonacci polynomials, Motzkin path, Riordan array, matrix inversion.

\bigskip
\hrule
\bigskip
\noindent (Concerned with sequences
\seqnum{A000045},
\seqnum{A011973},
\seqnum{A011973},
\seqnum{A097610},
\seqnum{A098614},
\seqnum{A107131},
\seqnum{A109187},
\seqnum{A111959}, and
\seqnum{A200375}.)


\begin{thebibliography}{9}

\bibitem{Rinversion} P. Barry, On the inversion of Riordan arrays, \url{https://arxiv.org/abs/2101.06713}.

\bibitem{book} P. Barry, \emph{Riordan Arrays: a Primer}, Logic Press, 2017.

\bibitem{Benjamin} A. T. Benjamin, \emph{Proofs that really count: the art of combinatorial proof}, Mathematical Association of America.

\bibitem{H2} P. Henrici, An algebraic proof of the Langrange-B\"urmann formula, \emph{J. Math. Anal. Appl.}, \textbf{8} (1964), 218--224.

\bibitem{Hoggatt1} V. E. Hoggatt and M. Bicknell, Roots of Fibonacci polynomials, \emph{Fibonacci Quart.}, \textbf{11} (1973), 271--274.

\bibitem{Hoggatt2} V. E. Hoggatt and Calvin T. Long, Divisibility properties of generalized Fibonacci
Polynomials, \emph{Fibonacci Quart.}, \textbf{12} (1974), 113-120.

\bibitem{LI} D. Merlini, R. Sprugnoli, and M.~C.~Verri,
Lagrange inversion: when and how, \emph{Acta Appl. Math.}, \textbf{94} (2006),  233--249.

\bibitem{Ricci} P. E. Ricci, Generalized Lucas polynomials and Fibonacci polynomials,
\emph{Riv. Math. Univ. Parma}, \textbf{5} (1995), 137–146

\bibitem{SGWW} L. W. Shapiro, S. Getu, W-J. Woan, and L.C. Woodson,
The Riordan group, \emph{Discr. Appl. Math.}, \textbf{34} (1991),
 229--239.

\bibitem{SL1} N. J. A.~Sloane, \emph{The
On-Line Encyclopedia of Integer Sequences}. Published electronically
at \texttt{http://oeis.org}, 2021.

\bibitem{SL2} N. J. A.~Sloane, The On-Line Encyclopedia of Integer
Sequences, \emph{Notices Amer. Math. Soc.}, \textbf{50} (2003),  912--915.

\bibitem{Yuan} Yi Yuan and W. Zhang, Some identities involving the Fibonacci Polynomials, \emph{Fibonacci Quart.}, \textbf{40} (2002), 314--318.


\end{thebibliography}
\end{document}